\newtheorem{thr}{Theorem}[section]
\newtheorem{lem}[thr]{Lemma}
\newtheorem{pr}[thr]{Proposition}
\theoremstyle{definition}
\newtheorem{conv}[thr]{Convention}
\newtheorem{notat}[thr]{Notation}
\newtheorem{prob}[thr]{Problem}
\theoremstyle{remark}
\newtheorem{remr}[thr]{Remark}
\numberwithin{equation}{section}
\def\R{\mathbb{R}}
\def\V{{\mathcal V}}
\def\R{\mathbb{R}}
\begin{document}

\title[An upper bound for nonnegative rank]{An upper bound for nonnegative rank}

\author{Yaroslav Shitov}
\address{National Research University Higher School of Economics, 20 Myasnitskaya Ulitsa, Moscow 101000, Russia}
\email{yaroslav-shitov@yandex.ru}


\begin{abstract}
We provide a nontrivial upper bound for the nonnegative rank of rank-three matrices, which allows us to prove that
$\left\lceil\frac{6n}{7}\right\rceil$ linear inequalities suffice to describe a convex $n$-gon up to a linear projection.
\end{abstract}

\maketitle

\section{Preliminaries}

Consider a convex polytope $P\subset\R^n$.
An \textit{extension}~\cite{FRT, GG} of $P$ is a polytope $Q\subset\R^d$ such that $P$
can be obtained from $Q$ as an image under a linear projection from $\R^d$ to $\R^n$.
An \textit{extended formulation}~\cite{GG, Kai} of $P$ is a description of $Q$ by linear equations and
linear inequalities (together with the projection). The \textit{size}~\cite{GG, Kai} of the extended formulation is
the number of facets of $Q$. The \textit{extension complexity}~\cite{GG, Kai} of a polytope $P$ is the smallest size of any extended
formulation of $P$, that is, the minimal possible number of inequalities in the description of $Q$.
The number of facets of $Q$ can sometimes be significantly smaller~\cite{FRT} than that of $P$,
and this phenomenon can be used to reduce the complexity of linear programming problems
useful for numerous applications~\cite{CCZ, FRT, Kai}.

An important result providing the linear algebraic characterization of extended formulations
has been obtained in 1991 by Yannakakis~\cite{Yan}. Let a polytope $P$ (with $v$ vertices and $f$ facets) be defined as the
set of all points $x\in\R^n$ satisfying the conditions $c_i(x)\geq \beta_i$ and $c_j(x)=\beta_j$, for
$i\in\{1,\dots,f\}$ and $j\in\{f+1,\dots,q\}$, where $c_1,\dots,c_q$ are linear functionals on $\R^n$.
A slack matrix $S=S(P)$ of $P$ is an $f$-by-$v$ matrix satisfying $S_{it}=c_i(p_t)-\beta_i$, where $p_1,\dots,p_v$
denote the vertices of $P$, and we note that $S$ is nonnegative. The following well-known result
(see~\cite[Corollary 5]{GG} and also~\cite[Lemma 3.1]{GRT}) characterizes the rank of $S(P)$
in terms of the dimension of $P$.

\begin{pr}\label{statrankslac}
A slack matrix of a polytope $P$ has classical rank one greater than the dimension of $P$.
\end{pr}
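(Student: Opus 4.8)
The plan is to realize the slack matrix as a product of two explicit matrices and then pin down the ranks of the factors; throughout I assume $\dim P=m\geq 1$, the $0$-dimensional case being degenerate. Writing each functional as $c_i(x)=\langle a_i,x\rangle$, I would form the $(n+1)\times v$ matrix $V$ whose $t$-th column is $\binom{p_t}{1}$ and the $f\times(n+1)$ matrix $A$ whose $i$-th row is $(a_i,-\beta_i)$; a direct check gives $S=AV$. Since a set of vertices of $P$ is affinely independent exactly when the corresponding columns of $V$ are linearly independent, and the largest affinely independent set of vertices has $m+1$ elements (they affinely span the $m$-dimensional set $\operatorname{aff}P$), one gets $\operatorname{rank}V=m+1$ and hence the easy inequality $\operatorname{rank}S=\operatorname{rank}(AV)\leq m+1$.

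For the reverse inequality it is enough to show that $A$ is injective on the column space of $V$, for then $\operatorname{rank}(AV)=\operatorname{rank}V=m+1$. The column space of $V$ is the homogenization of $\operatorname{aff}P$, i.e.\ the set of vectors $\binom{x}{s}\in\R^{n+1}$ with either $s\neq 0$ and $x/s\in\operatorname{aff}P$, or $s=0$ and $x$ parallel to $\operatorname{aff}P$; while $\ker A$ consists of the $\binom{x}{s}$ with $c_i(x)=\beta_i s$ for all facet indices $i$. So I would take $\binom{x}{s}$ lying in both and seek a contradiction. If $s\neq 0$, rescale to $s=1$: then $x\in\operatorname{aff}P$ and $c_i(x)=\beta_i$ for every $i$, so $x\in P$ (because $P$ is the intersection of $\operatorname{aff}P$ with the facet halfspaces) and $x$ lies on every facet; writing $x$ as a convex combination of vertices and using that the slacks are nonnegative shows that some vertex $p_\ast$ lies on every facet, so after translating by $p_\ast$ all facet halfspaces become homogeneous, forcing $P-p_\ast$ to be closed under multiplication by nonnegative scalars and hence, being bounded, to equal $\{0\}$ — impossible since $m\geq1$. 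If $s=0$, then $x\neq0$ is parallel to $\operatorname{aff}P$ and $c_i(x)=0$ for all $i$, so $P$ is invariant under translation by $\R x$, contradicting boundedness.

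The substantive points beyond bookkeeping, and the places I would be careful, are: the identity $S=AV$ and the description of $\operatorname{col}V$; the fact that $P$ equals $\operatorname{aff}P$ intersected with the facet-defining halfspaces (immediate once one notes that $\operatorname{aff}P$ is contained in the affine subspace cut out by the equality constraints); the convex-combination step producing a vertex incident to all facets; and the fact that a bounded polyhedral cone is $\{0\}$. I expect the $s=0$ case and these observations to be routine, and the $s\neq 0$ case — keeping track of the interplay between $\operatorname{aff}P$, the facet halfspaces, and boundedness — to be the main, though still mild, obstacle.
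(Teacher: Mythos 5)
Your argument is correct and complete. Note, however, that the paper does not prove Proposition~\ref{statrankslac} at all: it is quoted as a known fact with references to \cite[Corollary 5]{GG} and \cite[Lemma 3.1]{GRT}, so there is no internal proof to compare against. Your route is essentially the standard one from those references: the factorization $S=AV$ through the homogenizations $\binom{p_t}{1}$ immediately gives $\operatorname{rank}S\le\dim P+1$, and the only real content is the reverse inequality, which you correctly reduce to $\ker A\cap\operatorname{col}V=\{0\}$. Both cases of that reduction are handled soundly: for $s\neq 0$ the convex-combination argument does produce a vertex with a zero column of $S$, and the resulting cone/boundedness contradiction is valid; for $s=0$ the translation-invariance contradiction is valid (using, as you note, that $P=\operatorname{aff}P\cap\bigcap_i\{c_i\ge\beta_i\}$, which holds because $\operatorname{aff}P$ lies inside the affine subspace cut out by the equality constraints). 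The only caveats are cosmetic: you implicitly use that the $f$ listed inequalities are exactly the facet-defining (irredundant) ones, which is the intended reading of the paper's setup, and you exclude $\dim P=0$, which is harmless here since the paper only applies the proposition to polygons.
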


The result by Yannakakis points out the connection between extension complexity and nonnegative
factorizations and can now be formulated as follows~\cite{GG, Kai, Yan}.

\begin{thr}\cite[Theorem 2]{Kai}\label{thrYan}
The extension complexity of a polytope $P$ is equal to the minimal $k$ for which $S(P)$ can be written
as a product of $f$-by-$k$ and $k$-by-$v$ nonnegative matrices.
\end{thr}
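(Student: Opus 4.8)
The plan is to prove the two inequalities $\operatorname{nnrank}S(P)\le\operatorname{xc}(P)$ and $\operatorname{xc}(P)\le\operatorname{nnrank}S(P)$, where $\operatorname{xc}(P)$ denotes the extension complexity of $P$ and $\operatorname{nnrank}M$ the least inner dimension of a nonnegative factorization of $M$; together these give the asserted equality $\operatorname{xc}(P)=\operatorname{nnrank}S(P)$.

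For the first inequality I would fix an extension $Q\subseteq\R^d$ with $k:=\operatorname{xc}(P)$ facets, written as $Q=\{y:a_\ell(y)\ge\alpha_\ell,\ \ell=1,\dots,k\}\cap\operatorname{aff}(Q)$ where the $a_\ell(y)\ge\alpha_\ell$ are its irredundant facet inequalities, and let $\pi$ be the projection with $\pi(Q)=P$. For each facet inequality $c_i(x)\ge\beta_i$ of $P$, the function $y\mapsto c_i(\pi(y))-\beta_i$ is nonnegative on $Q$, so the affine Farkas lemma provides reals $F_{i\ell}\ge0$ and an affine function $h_i$ vanishing on $\operatorname{aff}(Q)$ with
\[
c_i(\pi(y))-\beta_i=\sum_{\ell=1}^{k}F_{i\ell}\bigl(a_\ell(y)-\alpha_\ell\bigr)+h_i(y).
\]
Picking, for each vertex $p_t$ of $P$, a preimage $y_t\in Q$ with $\pi(y_t)=p_t$ and evaluating at $y_t$ kills $h_i$ and yields $S(P)_{it}=\sum_\ell F_{i\ell}V_{\ell t}$ with $V_{\ell t}:=a_\ell(y_t)-\alpha_\ell\ge0$; hence $S(P)=FV$ with $F$ of size $f\times k$ and $V$ of size $k\times v$, both nonnegative, so $\operatorname{nnrank}S(P)\le k$.

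For the reverse inequality I would take a nonnegative factorization $S(P)=FV$ of minimal inner dimension $k:=\operatorname{nnrank}S(P)$; then $F$, of size $f\times k$, has no zero column, since otherwise one could delete that column together with the matching row of $V$. Lift $P$ to
\[
Q=\bigl\{(x,z)\in\R^n\times\R^k:z\ge0,\ \textstyle\sum_{\ell}F_{i\ell}z_\ell=c_i(x)-\beta_i\ (1\le i\le f),\ c_j(x)=\beta_j\ (f<j\le q)\bigr\},
\]
with $\pi(x,z)=x$. Nonnegativity of $F$ and of $z$ gives $\pi(Q)\subseteq P$ at once; conversely, if $x=\sum_t\lambda_t p_t$ is a convex combination of vertices of $P$, then $(x,\sum_t\lambda_t V_{\cdot t})\in Q$ projects to $x$, using linearity of the $c_i$ together with $S(P)_{it}=\sum_\ell F_{i\ell}V_{\ell t}$; hence $\pi(Q)=P$. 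Every facet of the polyhedron $Q$ lies in one of the $k$ hyperplanes $\{z_\ell=0\}$, so $Q$ has at most $k$ facets, and this will give $\operatorname{xc}(P)\le k=\operatorname{nnrank}S(P)$.

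The step I expect to be the genuine obstacle is showing that $Q$ is a polytope and not merely an unbounded polyhedron. I would argue that $\pi$ sends the recession cone of $Q$ into that of $P=\pi(Q)$, which is $\{0\}$ since $P$ is bounded; so every recession direction of $Q$ has the form $(0,w)$ with $w\ge0$ and $Fw=0$. But a nonnegative matrix with no zero column admits no nonzero $w\ge0$ with $Fw=0$ (argue coordinatewise: if $w_\ell>0$ then column $\ell$ of $F$ would have to vanish). Hence the recession cone of $Q$ is trivial, $Q$ is a polytope, and it is an extension of $P$ with at most $k$ facets. Together with the first inequality this proves $\operatorname{xc}(P)=\operatorname{nnrank}S(P)$. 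Every step other than this boundedness check is a direct verification once the lifted polyhedron and the affine Farkas lemma are available.
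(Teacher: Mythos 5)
The paper does not prove this statement at all --- it is quoted from Kaibel's survey (ultimately Yannakakis) --- so the comparison is against the standard literature proof, which is exactly the two-inequality scheme you follow. Your reverse direction (lifting a minimal factorization $S(P)=FV$ to the polyhedron $Q$, noting $F$ has no zero column, counting at most $k$ facets, and killing the recession cone of $Q$ via $\pi(\operatorname{rec}Q)\subseteq\operatorname{rec}P=\{0\}$ and $Fw=0,\ w\ge0\Rightarrow w=0$) is complete and correct.

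The forward direction, however, has a genuine gap at the precise point where you invoke the affine Farkas lemma. For an affine function that is nonnegative on $Q=\{y:a_\ell(y)\ge\alpha_\ell\}\cap\operatorname{aff}(Q)$, Farkas gives
\begin{equation*}
c_i(\pi(y))-\beta_i=\mu_i+\sum_{\ell=1}^{k}F_{i\ell}\bigl(a_\ell(y)-\alpha_\ell\bigr)+h_i(y)
\end{equation*}
with an \emph{additional constant} $\mu_i\ge0$ that cannot in general be omitted (already for $g\equiv 1$ on a half-line no constant-free representation exists). Evaluating at the lifted vertices $y_t$ therefore yields only $S(P)=FV+\mu\mathbf{1}^\top$, a nonnegative factorization of inner dimension $k+1$, i.e.\ the bound $\operatorname{nnrank}S(P)\le\operatorname{xc}(P)+1$, which is not the theorem. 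The missing idea is that, because $Q$ is a \emph{bounded} polytope of dimension at least one, the constant function $1$ itself lies in the cone generated by the facet slacks $a_\ell-\alpha_\ell$ modulo affine functions vanishing on $\operatorname{aff}(Q)$: if it did not, a separating linear functional on the space of affine functions would be (after normalization) evaluation at some point $w$ with $a_\ell(w)-\alpha_\ell\le0$ for all $\ell$, and then for any relative interior point $q$ of $Q$ the ray $\{(1-\theta)w+\theta q:\theta\ge1\}$ would satisfy all facet inequalities, contradicting boundedness. Writing $1=\sum_\ell\nu_\ell(a_\ell-\alpha_\ell)+h'$ with $\nu\ge0$ and evaluating at the $y_t$ gives $\mathbf{1}^\top=\nu^\top V$, whence $S(P)=(F+\mu\nu^\top)V$ is a nonnegative factorization of inner dimension $k$. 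It is worth noting that you identified boundedness of the lift as the ``genuine obstacle'' in the reverse direction, where your treatment is fine, while it is in the forward direction that boundedness of the extension is used silently and essentially.
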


In general, the smallest integer $k$ for which there exists a factorization $A=BC$ with $B\in\R_+^{n\times k}$
and $C\in\R_+^{k\times m}$ is called the \textit{nonnegative rank} of a nonnegative matrix $A\in\R_+^{n\times m}$.
Nonnegative factorizations are being widely studied and used in data analysis, statistics, computational biology,
clustering and numerous other applications~\cite{CR}. There are still many open questions on nonnegative rank interesting
for different applications, and a considerable part of them is related to providing the bounds on the nonnegative
rank in terms of other matrix invariants~\cite{FMPTdW, GG, Kai}.

In fact, it has still been unknown whether any nontrivial upper bound for the nonnegative rank exists in terms
of the classical rank function. It is easy to show that the nonnegative rank of a matrix equals~\cite{CR} the
classical rank if one of them is less than $3$. However, even for a rank-three $m$-by-$n$ matrix,
no upper bound (instead of $\min\{m,n\}$, which is trivial) for the nonnegative rank has been known.

\begin{prob}\cite[Conjecture 3.2]{BL}\label{probBL}
Assume $n\geq3$. Does there exist a rank-three $n$-by-$n$ nonnegative matrix with nonnegative rank equal to $n$?
\end{prob}

In view of Proposition~\ref{statrankslac} and Theorem~\ref{thrYan}, one can ask a related question on whether
there exists a convex $n$-gon with extension complexity equal to $n$, for every $n$. For $n\leq 5$, Problem~\ref{probBL}
has been solved in the positive in~\cite{GG}. In~\cite{GPT} it was noted that a sufficiently irregular convex hexagon
has full extension complexity, stating the positive answer for $n=6$. For $n\geq7$, the problem has been open.

Lin and Chu~\cite{LC} claimed a positive answer for Problem~\ref{probBL}, but their argument has been
shown to contain a gap~\cite{GG, Hr}. A negative answer for Problem~\ref{probBL} has been obtained
in~\cite{GG} for a special case of so-called Euclidean distance matrices. The factorizations of
those matrices have been studied subsequently in~\cite{Hr}, and the logarithmic upper bounds have
been obtained in a number of important special cases. A detailed investigation of extended formulations
of convex polygons has been undertaken in~\cite{FRT}, but the question about an $n$-gon with extension
complexity equal to $n$ has also been left open.

In our paper we solve Problem~\ref{probBL} and prove that for $n>6$, the answer is negative.
In fact, we provide a nontrivial upper bound for the nonnegative rank of matrices in terms
of classical rank and prove that an $m$-by-$n$ rank-three matrix cannot have nonnegative rank
greater than $\left\lceil\frac{6\min\{m,n\}}{7}\right\rceil$. We also answer the question
on extension complexity and show that a convex $n$-gon has extension complexity at most
$\left\lceil\frac{6n}{7}\right\rceil$. That is, we prove that any convex $n$-gon admits
a description with $\left\lceil\frac{6n}{7}\right\rceil$ linear inequalities up to a projection.

The organization of the paper is as follows. In the second section, we prove the main result in a
special case of slack matrices of convex heptagons, thus showing that any convex heptagon admits a
description with six linear inequalities. In the third section, we use those results and prove the
main results of our paper, which include the upper bounds for the extension complexity of a polygon
and for the nonnegative rank of a rank-three matrix.

\section{Factoring a slack matrix of a convex heptagon}

In this section, we will prove that slack matrices of convex heptagons have nonnegative ranks less than $7$.
The considerations of this section deal with matrices having not more than seven rows and seven columns,
and we adopt the following convention in order to make the presentation more concise.

\begin{conv}\label{convmod7}
Throughout this section, the row and column indexes of the matrices considered
are to be understood as the elements of the ring $\mathbb{Z}/7\mathbb{Z}$. In
particular, $A_{3+6,6+1}$ will stand for the $(2,7)$th entry of a matrix $A$.
Also, we will use the letters $i$ and $j$ only for denoting such indexes in
the present section, and we operate with $i$ and $j$ as with elements from
$\mathbb{Z}/7\mathbb{Z}$, throughout the section.
\end{conv}

Let us introduce a certain special form of matrices which will be important for the considerations of the present section.
By $W[i,j,k]$ we denote the submatrix of $W$ formed by the rows with indexes $i$, $j$, and $k$.

\begin{notat}\label{convnotat}
Given a real vector $\alpha=(a_1,a_2,a_3,b_1,b_2,b_3)$. By $W(\alpha)$ we will denote the $7$-by-$3$ matrix
$$\left(
  \begin{array}{ccccccc}
    0 & 0 & 1 & 1 & a_1 & a_2 & a_3 \\
    1 & 0 & 0 & 1 & 1 & 1 & 1 \\
    1 & 1 & 0 & 0 & b_1 & b_2 & b_3 \\
  \end{array}
\right)^\top,$$
and by $\V(\alpha)$ the $7$-by-$7$ matrix with $(i,j)$th entry equal to $\det W[i-1,j-2,j-1]$.
\end{notat}

The following lemma points out a symmetry in the construction of $\V$.

\begin{lem}\label{lemforwlog}
Matrices $\V(a_1,a_2,a_3,b_1,b_2,b_3)$ and $\V(b_3,b_2,b_1,a_3,a_2,a_1)$
coincide up to relabeling the rows and columns.
\end{lem}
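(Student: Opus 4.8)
The plan is to realize the passage from $\V(a_1,a_2,a_3,b_1,b_2,b_3)$ to $\V(b_3,b_2,b_1,a_3,a_2,a_1)$ by a single reflection of the three-dimensional column space of $W$, combined with a permutation of the seven rows, and then to transport that symmetry through the $3\times3$ minors used in Notation~\ref{convnotat}. Throughout, write $\alpha=(a_1,a_2,a_3,b_1,b_2,b_3)$ and $\alpha'=(b_3,b_2,b_1,a_3,a_2,a_1)$.

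First I would introduce the $3\times3$ permutation matrix $R$ that reverses the order of the coordinates, so that a row vector $(x,y,z)$ becomes $(z,y,x)$ after right multiplication by $R$, and $\det R=-1$. Reading the seven rows of $W(\alpha)$ and of $W(\alpha')$ straight off Notation~\ref{convnotat}, one checks that row $5-\ell$ of $W(\alpha')$ equals row $\ell$ of $W(\alpha)$ multiplied on the right by $R$, for every $\ell\in\mathbb{Z}/7\mathbb{Z}$: right multiplication by $R$ permutes the four rows $(0,1,1),(0,0,1),(1,0,0),(1,1,0)$ among themselves in reversed order and exchanges $(a_t,1,b_t)$ with $(b_t,1,a_t)$, and following each row to its image shows that the induced permutation of row indices is $\ell\mapsto 5-\ell$. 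It follows that for any $p,q,r$ the matrix $W(\alpha')[5-p,5-q,5-r]$ is $W(\alpha)[p,q,r]$ multiplied on the right by $R$, hence $\det W(\alpha')[5-p,5-q,5-r]=-\det W(\alpha)[p,q,r]$.

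Next I would substitute this into $\V(\alpha')_{i,j}=\det W(\alpha')[i-1,j-2,j-1]$. Writing $i-1=5-(6-i)$, $j-2=5-(7-j)$ and $j-1=5-(6-j)$, the identity above gives $\V(\alpha')_{i,j}=-\det W(\alpha)[6-i,\,7-j,\,6-j]$. Swapping the last two rows, $\det W(\alpha)[6-i,7-j,6-j]=-\det W(\alpha)[6-i,6-j,7-j]$, and since $7-j$ is the successor of $6-j$ in $\mathbb{Z}/7\mathbb{Z}$, this last triple has the form $(s-1,t-2,t-1)$ with $s=-i$ and $t=1-j$, so it equals $\V(\alpha)_{-i,\,1-j}$. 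The two sign changes cancel, and we obtain $\V(\alpha')_{i,j}=\V(\alpha)_{-i,\,1-j}$. As $i\mapsto -i$ and $j\mapsto 1-j$ are bijections of $\mathbb{Z}/7\mathbb{Z}$, this says precisely that $\V(\alpha')$ is obtained from $\V(\alpha)$ by relabeling the rows and the columns, which is the assertion of the lemma.

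The computation is otherwise routine; the point that needs care is the interplay of signs with the index arithmetic modulo $7$ — one must verify that the sign $\det R=-1$ picked up from the coordinate reflection is exactly cancelled by the sign appearing when the consecutive indices $j-2,j-1$ are put back into the ordered pattern $(t-2,t-1)$, so that the two matrices genuinely coincide up to relabeling and not merely up to relabeling together with an overall change of sign.
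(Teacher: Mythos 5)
Your proof is correct and lands on exactly the relabeling the paper uses: the identity $\V(\alpha')_{i,j}=\V(\alpha)_{-i,\,1-j}$ you derive is precisely the row permutation $(16)(25)(34)$ and column permutation $(17)(26)(35)$ that the paper's one-line proof invokes. The only difference is presentational: where the paper simply asserts the permutations (leaving the verification of the $49$ entries implicit), you derive them from the coordinate reflection $R$ of the columns of $W$ and correctly track the cancellation of the two signs, which is a cleaner justification of the same step.
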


\begin{proof}
Perform the permutation $(16)(25)(34)$ on the row indexes and $(17)(26)(35)$ on the column indexes of $\V(b_3,b_2,b_1,a_3,a_2,a_1)$.
\end{proof}

Let us present a useful special case when the nonnegative rank of $\V$ is not full.

\begin{lem}\label{lemàfactors}
Given a real vector $\psi=(a_1,a_2,a_3,b_1,b_2,b_3)$ for which
the matrix $V=\V(\psi)$ satisfies $V_{ij}>0$ if $i\notin\{j-1,j\}$.
If $a_1+b_1\geq a_2+b_2$ and $a_3+b_3\geq a_2+b_2$, then $V$ has nonnegative rank less than $7$.
\end{lem}

\begin{proof}
One can check that $V=FG$, where
$$F=\left(\begin{array}{cccccc}
0& 0& 1& V_{41} + V_{47}& V_{61}& 0\\
0& 0& 0& 1& a_1 - a_2 + b_1 - b_2& 1\\
V_{31}& 0& 0& 1& V_{37}&0\\
V_{41}& 1& 0& 0& V_{47}& 0\\
-a_2 + a_3 - b_2 + b_3& 1& 0&0& 0& 1\\
V_{61}& V_{31} + V_{37}& 1& 0& 0& 0\\
0&V_{31}& 1& V_{47}& 0& 0
\end{array}\right),$$
$$G=\left(\begin{array}{ccccccc}
1& V_{32}/V_{31}& 0& 0& 0& 0& 0\\
0& V_{21}/V_{31}& 1& 0& 0& 0& 0\\
0& 0& V_{13}& 1& V_{65}& 0& 0\\
0& 0& 0& 0& 1& V_{57}/V_{47}& 0\\
0& 0& 0& 0& 0& V_{65}/V_{47}& 1\\
V_{72}& 0& 0& 1& 0& 0&V_{57}
\end{array}\right).$$
\end{proof}

Now we show how can one construct new full-rank matrices from given.

\begin{lem}\label{convnotat2}
Given a real vector $\psi=(a_1,a_2,a_3,b_1,b_2,b_3)$ for which
the matrix $V=\V(\psi)$ satisfies $V_{ij}>0$ if $i\notin\{j-1,j\}$.
Take $\alpha_1=(1 - a_3 - b_3)/(1 - b_3)$, $\alpha_2=(a_1 - a_1b_3 - a_3 + a_3b_1)/(a_1 - a_1b_3)$,
$\alpha_3=(a_2 - a_2b_3 - a_3 + a_3b_2)/(a_2 - a_2b_3)$, $\beta_1=a_3$, $\beta_2=a_3/a_1$, $\beta_3=a_3/a_2$.
Then the matrix $U=\V\left(\alpha_1,\alpha_2,\alpha_3,\beta_1,\beta_2,\beta_3\right)$ satisfies $U_{ij}>0$
if $i\notin\{j,j+1\}$ and has nonnegative rank equal to that of $V$.
\end{lem}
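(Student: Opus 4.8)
The plan is to establish the two claims—positivity of the relevant entries of $U$ and the equality of nonnegative ranks—by a direct computation combined with one structural observation. First I would write out explicitly the $3\times3$ determinants defining $U_{ij}=\det W[i-1,j-2,j-1]$ for $W=W(\alpha_1,\alpha_2,\alpha_3,\beta_1,\beta_2,\beta_3)$ and compare them to the determinants $V_{ij}=\det W[i-1,j-2,j-1]$ for $W=W(\psi)$. The reciprocal-type formulas for the $\alpha_\ell$ and $\beta_\ell$ (note that $\beta_\ell$ is essentially $a_3$ divided by the corresponding $a$-coordinate of $\psi$, and $1-\alpha_\ell$ is a ratio of the form $(a_3-a_3 b_3\cdots)/(\cdots)$) strongly suggest that each $U_{ij}$ equals $V_{i'j'}$ for a suitably shifted pair of indices, up to multiplication by a fixed positive scalar depending only on $j$. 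Concretely, I expect a relation of the shape $U_{ij} = \lambda_j \cdot V_{\sigma(i),\tau(j)}$ where $\sigma,\tau$ are shifts by a constant amount in $\mathbb{Z}/7\mathbb{Z}$ (the $+1$ versus $-1$ discrepancy between the hypotheses $i\notin\{j-1,j\}$ for $V$ and $i\notin\{j,j+1\}$ for $U$ is exactly the fingerprint of such a shift) and $\lambda_j>0$. Once this is verified, positivity of $U_{ij}$ for $i\notin\{j,j+1\}$ follows immediately from the positivity hypothesis on $V$.

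For the nonnegative rank statement, the key point is that multiplying the columns (or rows) of a matrix by positive scalars does not change its nonnegative rank, and neither does a simultaneous permutation of rows and columns. So if $U$ is obtained from $V$ by a row/column shift followed by scaling columns by the positive numbers $\lambda_j$, then $\operatorname{rank}_+(U)=\operatorname{rank}_+(V)$ is automatic. Thus the entire lemma reduces to the bookkeeping identity $U = D_r \cdot P_r V P_c \cdot D_c$ for appropriate permutation matrices $P_r,P_c$ realizing the shifts and positive diagonal matrices $D_r,D_c$; this is the heart of the argument and the one place where the specific algebraic form of the $\alpha_\ell,\beta_\ell$ is used in an essential way.

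The main obstacle, then, is purely computational: pinning down the exact shift $\sigma,\tau$ and the exact scalars $\lambda_j$, and then checking the determinant identity $U_{ij}=\lambda_j V_{\sigma(i),\tau(j)}$ for all $49$ entries—or rather, using Convention \ref{convmod7} and the cyclic structure, for all residues $i,j\in\mathbb{Z}/7\mathbb{Z}$. I would organize this by noting that each row of $W(\psi)$ has a block of four entries that is index-independent ($0,0,1,1$ / $1,0,0,1$ / $1,1,0,0$) and a block of three entries carrying the parameters; the map $\psi\mapsto(\alpha,\beta)$ is crafted so that, after the shift, the parameter block transforms by a common projective substitution, which is why the determinants rescale by a single factor per column. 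One should also double-check that the denominators $1-b_3$, $a_1-a_1 b_3 = a_1(1-b_3)$, $a_2(1-b_3)$ appearing in the $\alpha_\ell$ are nonzero under the standing hypothesis $V_{ij}>0$ for $i\notin\{j-1,j\}$—this should follow because those denominators themselves appear (up to sign) among the off-band entries $V_{ij}$, so the hypothesis guarantees the construction is well-defined. Finally, I would invoke Lemma \ref{lemforwlog} if needed to reconcile the orientation of the shift, but I expect the direct computation to suffice.
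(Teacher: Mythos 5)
Your proposal is correct and follows essentially the same route as the paper: the paper's proof simply exhibits explicit monomial matrices $Q_1,Q_2$ (cyclic-shift permutations composed with positive diagonal scalings) with $V=Q_1UQ_2$, noting that the scaling factors $1-b_3=V_{42}$, $a_1=V_{63}$, $a_2=V_{73}$, $a_3=V_{13}$ are positive by hypothesis, exactly as you predict with your factorization $U=D_r P_r V P_c D_c$. Your only imprecision is the initial guess that the scalar depends only on the column index $j$; in fact both row and column scalings are needed, which you correct in your final formulation.
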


\begin{proof}
One can check that $V=Q_1UQ_2$, where
$$Q_1=\left(
  \begin{array}{ccccccc}
0& 1& 0& 0& 0& 0& 0\\
0& 0& 1& 0& 0& 0& 0\\
0& 0& 0& 1/(1 - b_3)&0& 0& 0\\
0& 0& 0& 0& 1/a_3& 0& 0\\
0& 0& 0& 0& 0& 1/a_3& 0\\
0& 0&0& 0& 0& 0& a_1/a_3\\
a_2/a_3& 0& 0& 0& 0& 0& 0
\end{array}
\right),$$
$$Q_2=\left(
\begin{array}{ccccccc}
0& 0& 0& 0& 0& 0& \frac{a_1 a_2 (1 - b_3)}{a_3}\\
a_2(1-b_3)& 0& 0& 0& 0& 0& 0\\
0& a_3 (1 - b_3)& 0& 0& 0& 0& 0\\
0& 0& a_3& 0& 0& 0& 0\\
0& 0& 0& 1& 0&0& 0\\
0& 0& 0& 0& \frac{1 - b_3}{a_3}& 0& 0\\
0& 0& 0& 0& 0&\frac{a_1 (1 - b_3)}{a_3}& 0
\end{array}
\right).$$
Since the numbers $1-b_3=V_{42}$, $a_1=V_{63}$, $a_2=V_{73}$, and $a_3=V_{13}$ are positive, the result follows.
\end{proof}

The following six real sequences will be important in our considerations.

\begin{notat}\label{lemsame}
Given a real vector $\psi=(a_1,a_2,a_3,b_1,b_2,b_3)$ for which
the matrix $V=\V(\psi)$ satisfies $V_{ij}>0$ if $i\notin\{j-1,j\}$.
We will consider the six sequences $\alpha_1(t)$, $\alpha_2(t)$, $\alpha_3(t)$,
$\beta_1(t)$, $\beta_2(t)$, and $\beta_3(t)$ of reals defined by
$\alpha_1(0)=a_1$, $\alpha_2(0)=a_2$, $\alpha_3(0)=a_3$,
$\beta_1(0)=b_1$, $\beta_2(0)=b_2$, $\beta_3(0)=b_3$, and also
$$\alpha_1(t+1)=\frac{1 - \alpha_3(t) - \beta_3(t)}{1 - \beta_3(t)},$$
$$\alpha_{\chi+1}(t+1)=\frac{\alpha_\chi(t) - \alpha_\chi(t)\beta_3(t) - \alpha_3(t) + \alpha_3(t)\beta_\chi(t)}
{\alpha_\chi(t) - \alpha_\chi(t)\beta_3(t)}\,\,\,\mbox{for}\,\,\,\chi\in\{1,2\},$$
$$\beta_1(t+1)=\alpha_3(t),\,\,\,\beta_2(t+1)=\alpha_3(t)/\alpha_1(t),\,\,\,\beta_3(t+1)=\alpha_3(t)/\alpha_2(t).$$
\end{notat}

\begin{remr}\label{remrwelldefseq}
Lemma~\ref{convnotat2} shows that the sequences  $\alpha_1(t)$, $\alpha_2(t)$,
$\alpha_3(t)$, $\beta_1(t)$, $\beta_2(t)$, and $\beta_3(t)$ are well defined.
\end{remr}

It turns out that the sequences introduced are in fact cyclic.

\begin{lem}\label{lemcyc}
Given a real vector $\psi=(a_1,a_2,a_3,b_1,b_2,b_3)$ for which
the matrix $V=\V(\psi)$ satisfies $V_{ij}>0$ if $i\notin\{j-1,j\}$.
Then $\alpha_1(7)=a_1$, $\alpha_2(7)=a_2$, $\alpha_3(7)=a_3$,
$\beta_1(7)=b_1$, $\beta_2(7)=b_2$, $\beta_3(7)=b_3$.
\end{lem}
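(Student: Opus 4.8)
The plan is to recognize the map $\psi \mapsto (\alpha_1(1),\dots,\beta_3(1))$ defined in Notation \ref{lemsame} as a combinatorial symmetry acting on the matrix $\V(\psi)$, and then to verify that iterating this symmetry seven times returns to the identity. The key observation is Lemma \ref{convnotat2}: passing from $\psi$ to its image under one step of the recursion replaces $V=\V(\psi)$ by a matrix $U=\V(\psi')$ with $V=Q_1 U Q_2$ for explicit positive monomial (permutation-times-diagonal) matrices $Q_1,Q_2$. The permutation part of $Q_1$ is the cyclic shift $i\mapsto i-1$ on rows and the permutation part of $Q_2$ is the cyclic shift $j\mapsto j-1$ on columns (one should read this off from where the nonzero entries of $Q_1,Q_2$ sit). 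Thus one step of the recursion amounts, up to positive diagonal scaling of rows and columns, to cyclically shifting both the row and column indices of $\V$ by one. Since indices live in $\mathbb{Z}/7\mathbb{Z}$ by Convention \ref{convmod7}, shifting seven times is the identity on the index set.

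First I would make the diagonal-scaling bookkeeping precise. Write $\V(\psi^{(t)})$ for the matrix obtained after $t$ steps, where $\psi^{(0)}=\psi$ and $\psi^{(t+1)}$ is the image of $\psi^{(t)}$ under the recursion of Notation \ref{lemsame}. Lemma \ref{convnotat2} gives $\V(\psi^{(t)}) = Q_1^{(t)}\,\V(\psi^{(t+1)})\,Q_2^{(t)}$ with $Q_1^{(t)} = P_1 D_1^{(t)}$ and $Q_2^{(t)} = D_2^{(t)} P_2$, where $P_1,P_2$ are the fixed cyclic shift permutation matrices above and $D_1^{(t)},D_2^{(t)}$ are positive diagonal matrices whose entries are the explicit rational functions of $\psi^{(t)}$ recorded in the proof of Lemma \ref{convnotat2}. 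Composing seven such relations,
\[
\V(\psi^{(0)}) \;=\; \Big(\textstyle\prod_{t=0}^{6} Q_1^{(t)}\Big)\,\V(\psi^{(7)})\,\Big(\textstyle\prod_{t=0}^{6} Q_2^{(t)}\Big)
\;=\; P_1^{7} D_1\,\V(\psi^{(7)})\, D_2 P_2^{7}
\]
for suitable positive diagonals $D_1,D_2$; and $P_1^{7}=P_2^{7}=I$ because $7$ shifts on $\mathbb{Z}/7\mathbb{Z}$ is trivial. Hence $\V(\psi^{(0)})$ and $\V(\psi^{(7)})$ differ only by a positive diagonal scaling of rows and columns.

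It remains to deduce that in fact $\psi^{(7)}=\psi^{(0)}$, i.e. that the diagonal scaling is trivial, by exploiting the rigidity of the parametrization $\V$. Here I would use that specific entries of $\V(\psi)$ recover the coordinates of $\psi$ up to known normalizations — for instance, in Notation \ref{convnotat} and in the proof of Lemma \ref{convnotat2} one sees $V_{13}=a_3$, $V_{63}=a_1$, $V_{73}=a_2$, $V_{42}=1-b_3$, and similarly the remaining $b$'s appear as $3\times 3$ minors involving the distinguished rows; more to the point, the second row of $W(\psi)$ is the all-ones vector, which forces the corresponding $2\times 2$ normalization so that the diagonal scaling relating $\V(\psi^{(0)})$ to $\V(\psi^{(7)})$ must be the identity. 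Concretely: a diagonal row-and-column scaling that sends a matrix of the form $\V(\alpha)$ to another matrix of the form $\V(\alpha')$ must be trivial, because such matrices have prescribed entries (the $0$'s and $1$'s coming from the first and third columns of $W$, together with the all-ones middle row) that pin down every diagonal factor to be $1$; then reading off $a_1,a_2,a_3,b_1,b_2,b_3$ from the entries listed above yields $\psi^{(7)}=\psi^{(0)}$. Alternatively, and perhaps more cleanly, one verifies the fixed-point identity $\alpha_1(7)=a_1,\dots,\beta_3(7)=b_3$ by a direct computation: the recursion of Notation \ref{lemsame} is an explicit birational self-map of $\mathbb{R}^6$ of a Lyness/QRT-type (each step is a fractional-linear substitution in a rotating set of coordinates), and one checks it has period $7$.

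The main obstacle is the last step: turning the "equal up to diagonal scaling, since $P_i^7=I$" observation into the exact equality $\psi^{(7)}=\psi$. Tracking the product $D_1=\prod_t P_1^{-t} D_1^{(t)} P_1^{t}$ of conjugated diagonals explicitly is messy, so I expect the cleanest route is the rigidity argument: show that any positive diagonal $\times$ $\V(\alpha)$ $\times$ positive diagonal which is again of the form $\V(\alpha')$ must have both diagonals equal to the identity (forced by the zero/one pattern and the all-ones row of $W$), whence $\V(\psi^{(7)})=\V(\psi)$ entrywise and the coordinate-recovery formulas give $\psi^{(7)}=\psi$. The periodicity $7$ is exactly the period of the index shift, which is why the recursion closes up after seven — and not fewer — steps; that $\psi^{(k)}\neq\psi$ for $0<k<7$ in general is not needed here, only that $\psi^{(7)}=\psi$.
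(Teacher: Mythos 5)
Your argument is correct in substance, but it takes a genuinely different route from the paper's: the paper disposes of Lemma \ref{lemcyc} with the single line ``by routine computation,'' i.e.\ a direct symbolic verification that the map of Notation \ref{lemsame} has period $7$. You instead derive the periodicity structurally from Lemma \ref{convnotat2}: each step gives $\V(\psi^{(t)})=Q_1^{(t)}\V(\psi^{(t+1)})Q_2^{(t)}$ with $Q_1^{(t)}$, $Q_2^{(t)}$ equal to a fixed cyclic shift times a positive diagonal, so seven steps compose to $\V(\psi^{(0)})=\tilde D_1\,\V(\psi^{(7)})\,\tilde D_2$ with $\tilde D_1,\tilde D_2$ positive diagonal (the shifts having order $7$), and rigidity of the parametrization finishes the job. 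This buys a conceptual explanation of \emph{why} the period is $7$, at the cost of still resting on the unverified ``one can check'' identity of Lemma \ref{convnotat2} (so computation is not really avoided, only relocated). Two small corrections to your rigidity step: the middle row of $W$ is $(0,0,1)$, not all ones (you mean the second column $(1,0,0,1,1,1,1)^{\top}$), and the prescribed entries do \emph{not} pin every diagonal factor to $1$ --- the parameter-independent entries of $\V(\alpha)$ sit only in columns $3,4,5$ (namely $V_{14}=V_{24}=V_{25}=V_{35}=V_{43}=V_{53}=V_{54}=V_{64}=V_{74}=1$), which forces all row factors $u_i$ to equal a common constant $c$ and $v_3=v_4=v_5=1/c$, while $v_1,v_2,v_6,v_7$ are not directly constrained at this stage. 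That weaker conclusion still suffices: it gives $u_iv_j=1$ for all $i$ and $j\in\{3,5\}$, and reading off $V_{13}=a_3$, $V_{63}=a_1$, $V_{73}=a_2$, $V_{15}=b_3$, $V_{65}=b_1$, $V_{75}=b_2$ then yields $\psi^{(7)}=\psi$ exactly as you intend, so the gap is presentational rather than substantive.
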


\begin{proof}
By routine computation.
\end{proof}

The following lemma gives a necessary condition for a matrix to be full-rank.

\begin{lem}\label{lem+4}
Given a real vector $\psi=(a_1,a_2,a_3,b_1,b_2,b_3)$ for which
the matrix $V=\V(\psi)$ satisfies $V_{ij}>0$ if $i\notin\{j-1,j\}$.
Then $\alpha_1(2)+\beta_1(2)\leq\alpha_2(2)+\beta_2(2)$ implies that
$\alpha_2(6)+\beta_2(6)<\alpha_3(6)+\beta_3(6)$.
\end{lem}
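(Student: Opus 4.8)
The plan is to exploit the cyclic structure established in Lemma~\ref{lemcyc} together with the two "construction" lemmas. The key observation is that Lemma~\ref{convnotat2} tells us the map $\psi\mapsto(\alpha_1,\dots,\beta_3)$ sends a matrix $V$ with positivity pattern $V_{ij}>0$ for $i\notin\{j-1,j\}$ to a matrix $U$ with the \emph{shifted} pattern $U_{ij}>0$ for $i\notin\{j,j+1\}$, preserving nonnegative rank. Combined with Lemma~\ref{lemforwlog}, a relabeling turns the shifted pattern back into the original one, and iterating this correspondence is exactly what produces the sequences $\alpha_\chi(t),\beta_\chi(t)$ of Notation~\ref{lemsame}, with period $7$ by Lemma~\ref{lemcyc}. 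So the strategy is: argue by contradiction, assuming both $\alpha_1(2)+\beta_1(2)\leq\alpha_2(2)+\beta_2(2)$ \emph{and} $\alpha_2(6)+\beta_2(6)\geq\alpha_3(6)+\beta_3(6)$, and derive that some $\V(\psi(t))$ — and hence, after the relabeling chain, the original $V$ — has nonnegative rank less than $7$, which we will combine (elsewhere) with a genuinely-rank-$7$ hypothesis; here the lemma is phrased as a clean implication between inequalities on the iterates.

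First I would set $\psi(t)=(\alpha_1(t),\alpha_2(t),\alpha_3(t),\beta_1(t),\beta_2(t),\beta_3(t))$ and record, via Lemma~\ref{convnotat2} and Remark~\ref{remrwelldefseq}, that each $V^{(t)}:=\V(\psi(t))$ has nonnegative rank equal to $\operatorname{rank}_+ V$ and satisfies the appropriate strict positivity pattern (alternating between the $i\notin\{j-1,j\}$ pattern and the $i\notin\{j,j+1\}$ pattern, the latter being convertible to the former by Lemma~\ref{lemforwlog}). Next I would bring in Lemma~\ref{lem�factors}: it says that if, for some $t$, the reindexed $\psi(t)=(a_1',a_2',a_3',b_1',b_2',b_3')$ satisfies $a_1'+b_1'\geq a_2'+b_2'$ and $a_3'+b_3'\geq a_2'+b_2'$, then $\operatorname{rank}_+\V(\psi(t))<7$. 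The hypothesis $\alpha_1(2)+\beta_1(2)\leq\alpha_2(2)+\beta_2(2)$ is the \emph{negation} of the first of these inequalities at $t=2$; the point of the lemma is that this negation, pushed forward four steps through the cyclic recursion, forces the analogous sum-inequalities to fail at $t=6$ as well — specifically it forces $\alpha_2(6)+\beta_2(6)<\alpha_3(6)+\beta_3(6)$, because otherwise one of the positivity-pattern-adjusted versions of the Lemma~\ref{lem�factors} hypotheses would hold at $t=6$ (or at an intermediate step), collapsing the nonnegative rank.

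Concretely, I would track the quantity $\delta(t):=\big(\alpha_1(t)+\beta_1(t)\big)-\big(\alpha_2(t)+\beta_2(t)\big)$ and a companion $\varepsilon(t):=\big(\alpha_3(t)+\beta_3(t)\big)-\big(\alpha_2(t)+\beta_2(t)\big)$ under the recursion of Notation~\ref{lemsame}. Using $\beta_1(t+1)=\alpha_3(t)$, $\beta_2(t+1)=\alpha_3(t)/\alpha_1(t)$, $\beta_3(t+1)=\alpha_3(t)/\alpha_2(t)$, together with the formulas for $\alpha_1(t+1),\alpha_2(t+1),\alpha_3(t+1)$, one expresses $\delta(t+1)$ and $\varepsilon(t+1)$ as explicit rational functions of $\psi(t)$ whose \emph{signs} are controlled by the signs of $\delta(t)$, $\varepsilon(t)$, and the (known positive) entries $V^{(t)}_{ij}$ appearing as denominators $1-\beta_3(t)$, $\alpha_1(t)$, $\alpha_2(t)$, etc. The heart of the argument is a sign-propagation bookkeeping: show $\delta(2)\le 0$ implies, step by step for $t=3,4,5,6$, that the pair $(\delta(t),\varepsilon(t))$ stays in a region from which $\varepsilon(6)>0$ is forced, while ruling out at each intermediate step the configuration that would trigger Lemma~\ref{lem�factors} (which, under the full-rank hypothesis implicit in the ambient argument, cannot occur) — and whenever such a configuration \emph{would} be triggered, that is precisely the case $\varepsilon(6)>0$ we are trying to prove, via the relabeling of Lemma~\ref{lemforwlog}.

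The main obstacle I anticipate is the bookkeeping in that last step: the recursion mixes the $a$'s and $b$'s in a way that does not obviously keep $\delta$ or $\varepsilon$ of constant sign, so one cannot simply say "$\delta$ decreases." Instead one must find the right invariant region in $(\delta,\varepsilon)$-space (or the right auxiliary combination, perhaps $\delta(t)\cdot(\text{something positive})+\varepsilon(t)\cdot(\text{something positive})$) that \emph{is} propagated by the recursion, and then verify it survives four iterations. I expect the positivity constraints $V^{(t)}_{ij}>0$ for $i\notin\{j-1,j\}$ (which, written out, are sign conditions on various $2\times2$ and $3\times3$ determinants of $W(\psi(t))$, e.g.\ $\det W[i-1,j-2,j-1]>0$) to be exactly what pins down the signs of the denominators and makes the propagation go through; identifying which of these inequalities are needed at each step, and checking the four-step composition, is the genuinely laborious part, though each individual verification is "routine computation" in the sense of Lemma~\ref{lemcyc}'s proof.
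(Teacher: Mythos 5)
There is a genuine gap: your proposal is a strategy outline whose decisive step is never executed. The entire content of the lemma is the claim that $\delta(2)\le 0$ forces $\varepsilon(6)>0$, and your plan for this is ``sign-propagation bookkeeping'' through four iterations of the recursion, for which you yourself note that neither $\delta$ nor $\varepsilon$ has any evident monotonicity and that one would need to find an as-yet-unidentified invariant region or auxiliary combination. Nothing in the proposal supplies that invariant, so the proof does not go through as written. A second, structural problem is the contradiction framing: you propose to assume both inequalities fail and derive that some $\V(\psi(t))$ has nonnegative rank less than $7$. But Lemma~\ref{lem+4} carries no full-nonnegative-rank hypothesis --- it is an unconditional algebraic implication about the sequences, and it is \emph{used} to establish the trichotomy in Lemma~\ref{lemthrou}, which is where the full-rank hypothesis first enters. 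Deriving ``rank less than $7$'' contradicts nothing in the hypotheses of Lemma~\ref{lem+4}, so at best you would prove a weaker conditional statement that cannot be substituted into the paper's subsequent argument without restructuring it.

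For comparison, the paper's proof avoids iteration entirely: one computes $\alpha_2(2)+\beta_2(2)-\alpha_1(2)-\beta_1(2)$ and $\alpha_3(6)+\beta_3(6)-\alpha_2(6)-\beta_2(6)$ as explicit rational functions of the \emph{original} parameters and finds that each factors as a manifestly positive ratio of entries of $V$ times a linear expression, namely $a_2(1-b_3)-a_3$ and $a_1(1-b_3)-a_3$ respectively. The implication then follows in one line from $1-b_3=V_{42}>0$ and $a_1-a_2=V_{37}>0$, since these give $a_1(1-b_3)>a_2(1-b_3)$. If you want to salvage your approach, the missing ingredient is precisely this observation that the $t=2$ and $t=6$ differences are governed by two comparable linear forms in $\psi$; without it, the four-step propagation you describe has no foothold.
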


\begin{proof}
A routine computation shows that
$$\alpha_2(2)+\beta_2(2)-\alpha_1(2)-\beta_1(2)=\frac{(-a_3 + a_2 (1 - b_3))\,V_{32}V_{21}}{V_{31}V_{73}V_{42}V_{52}},$$
so the sign of $\alpha_2(2)+\beta_2(2)-\alpha_1(2)-\beta_1(2)$ equals that of $-a_3 + a_2 (1 - b_3)$. Similarly,
$$\alpha_3(6)+\beta_3(6)-\alpha_2(6)-\beta_2(6)=\frac{V_{46}\,(-a_3 + a_1 (1 - b_3))}{V_{15}V_{36}},$$
so the sign of $\alpha_3(6)+\beta_3(6)-\alpha_2(6)-\beta_2(6)$ is that of $-a_3 + a_1 (1 - b_3)$.
It remains to note that $1-b_3=V_{42}>0$ and $a_1-a_2=V_{37}>0$.
\end{proof}

In fact, we can obtain a stronger condition that holds for full-rank matrices.

\begin{lem}\label{lemthrou}
Given a real vector $\psi=(a_1,a_2,a_3,b_1,b_2,b_3)$ for which
the matrix $V=\V(\psi)$ satisfies $V_{ij}>0$ if $i\notin\{j-1,j\}$ and has full nonnegative rank.
Then either $\alpha_1(t)+\beta_1(t)<\alpha_2(t)+\beta_2(t)<\alpha_3(t)+\beta_3(t)$ for every $t$
or $\alpha_1(t)+\beta_1(t)>\alpha_2(t)+\beta_2(t)>\alpha_3(t)+\beta_3(t)$ for every $t$.
\end{lem}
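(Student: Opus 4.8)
The plan is to combine the cyclicity statement of Lemma~\ref{lemcyc}, the sign propagation of Lemma~\ref{lem+4}, and the factorization criterion of Lemma~\ref{lem$\phi$factors} to rule out every ``mixed'' pattern of the three quantities
$s_\chi(t):=\alpha_\chi(t)+\beta_\chi(t)$, $\chi\in\{1,2,3\}$.
First I would observe that, by Lemma~\ref{convnotat2} applied iteratively, every $V(\alpha_1(t),\dots,\beta_3(t))$ has the same nonnegative rank as $V$, namely full rank $7$, and that each such matrix has strictly positive off-band entries (in the appropriate band, which shifts by one as $t$ increases by one; after seven steps Lemma~\ref{lemcyc} brings us back exactly to $V$). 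In particular Lemma~\ref{lem$\phi$factors}, suitably reindexed along the band shift, says that we cannot have simultaneously $s_1(t)\ge s_2(t)$ and $s_3(t)\ge s_2(t)$ for any $t$ — otherwise that matrix, hence $V$, would have nonnegative rank less than $7$. Symmetrically, Lemma~\ref{lemforwlog} (which reverses the roles of the $a$'s and $b$'s, hence swaps the index $1\leftrightarrow 3$ while fixing $2$) shows that we cannot have $s_1(t)\le s_2(t)$ and $s_3(t)\le s_2(t)$ simultaneously either. So for each fixed $t$, the value $s_2(t)$ is \emph{strictly} between $s_1(t)$ and $s_3(t)$: either $s_1(t)<s_2(t)<s_3(t)$ or $s_1(t)>s_2(t)>s_3(t)$, and in particular $s_2(t)\ne s_1(t)$ and $s_2(t)\ne s_3(t)$.

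Next I would use Lemma~\ref{lem+4} to show that the direction of this strict chain cannot change as $t$ increases. Lemma~\ref{lem+4} states that $s_1(2)\le s_2(2)$ forces $s_2(6)<s_3(6)$; but applying the same lemma after translating the index $t\mapsto t+1$ (which is legitimate, since the hypotheses are invariant under replacing $\psi$ by the next tuple in the sequence, and since the whole setup is $7$-periodic by Lemma~\ref{lemcyc}) gives, for every $t$: if $s_1(t)\le s_2(t)$ then $s_2(t+4)<s_3(t+4)$. Combined with the trichotomy from the previous paragraph (which says $s_2(t+4)<s_3(t+4)$ iff we are in the ``increasing'' case at time $t+4$), this reads: increasing at time $t$ $\Rightarrow$ increasing at time $t+4$. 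Since $\gcd(4,7)=1$, iterating $t\mapsto t+4$ visits all residues mod $7$, so the pattern is increasing at one $t$ iff it is increasing at \emph{all} $t$; the same argument with the reversed inequalities (again via Lemma~\ref{lemforwlog}) handles the decreasing case. This yields exactly the dichotomy asserted.

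The main obstacle I anticipate is bookkeeping the band-shift correctly: Lemma~\ref{lem$\phi$factors} is stated for matrices $V$ with $V_{ij}>0$ when $i\notin\{j-1,j\}$, whereas Lemma~\ref{convnotat2} produces a matrix $U$ with $U_{ij}>0$ when $i\notin\{j,j+1\}$ — the positivity band has moved. I would resolve this by noting that a cyclic relabeling $i\mapsto i+1$ (or $j\mapsto j-1$) of rows/columns, which changes neither the rank nor the nonnegative rank, turns the $\{j,j+1\}$-band back into the $\{j-1,j\}$-band; carrying out two such relabelings returns $V(\dots(t+2)\dots)$ to the standard form in which Lemmas~\ref{lem$\phi$factors} and~\ref{lem+4} are literally applicable, and Lemma~\ref{lemcyc} guarantees everything closes up after seven steps. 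Once this reindexing is pinned down, the two applications of Lemma~\ref{lem$\phi$factors} (direct and via Lemma~\ref{lemforwlog}) give the per-$t$ trichotomy, and the two applications of Lemma~\ref{lem+4} (direct and via Lemma~\ref{lemforwlog}), together with $\gcd(4,7)=1$, propagate it to a single global direction, completing the proof.
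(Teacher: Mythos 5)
Your core dynamical step is exactly the paper's: Lemma~\ref{lem+4}, translated in time and combined with the $7$-periodicity of Lemma~\ref{lemcyc}, gives ``$\alpha_1(t)+\beta_1(t)\le\alpha_2(t)+\beta_2(t)$ implies $\alpha_2(t+4)+\beta_2(t+4)<\alpha_3(t+4)+\beta_3(t+4)$'', the factorization lemma (the one asserting that $a_1+b_1\ge a_2+b_2$ and $a_3+b_3\ge a_2+b_2$ force nonnegative rank below $7$) then pins down $\alpha_1(t+4)+\beta_1(t+4)<\alpha_2(t+4)+\beta_2(t+4)$, and $\gcd(4,7)=1$ spreads the increasing chain over all residues. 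The gap is in how you dispose of the remaining configurations, and it stems from a misreading of Lemma~\ref{lemforwlog}. Writing $s_\chi(t)=\alpha_\chi(t)+\beta_\chi(t)$ as you do, the substitution $(a_1,a_2,a_3,b_1,b_2,b_3)\mapsto(b_3,b_2,b_1,a_3,a_2,a_1)$ sends $s_1$ to $s_3$, fixes $s_2$, and sends $s_3$ to $s_1$. The hypothesis ``$s_1\ge s_2$ and $s_3\ge s_2$'' of the factorization lemma is symmetric in $s_1$ and $s_3$, so applying that lemma to the reversed vector reproduces the exclusion you already had; it does \emph{not} exclude ``$s_1\le s_2$ and $s_3\le s_2$''. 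Consequently your per-$t$ trichotomy is only half proved ($s_2$ being the maximum is not ruled out by your argument), and the ``same argument with reversed inequalities'' for the decreasing branch is unsupported --- for that you would additionally need the reversal to commute with the recursion of Notation~\ref{lemsame}, which is nowhere established.

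The paper sidesteps both issues by splitting only on the comparison $s_1(t)$ versus $s_2(t)$. If $s_1(t)\le s_2(t)$ for some $t$, your argument yields the increasing chain everywhere. If instead $s_1(t)>s_2(t)$, the one available direction of the factorization lemma forces $s_3(t)<s_2(t)$, i.e., the decreasing chain at time $t$, and the \emph{contrapositive} of Lemma~\ref{lem+4}, applied with a shift of $3$ rather than $4$ (namely $s_2(t)\ge s_3(t)$ implies $s_1(t+3)>s_2(t+3)$), propagates it using $\gcd(3,7)=1$. Alternatively, note that if $s_1(t)>s_2(t)$ held for every $t$, the factorization lemma alone would give $s_2(t)>s_3(t)$ for every $t$, which is the decreasing horn outright. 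Your concern about the positivity band shifting under Lemma~\ref{convnotat2} is reasonable bookkeeping but not where the difficulty lies; once that is settled (as the paper implicitly assumes via Remark~\ref{remrwelldefseq}), the missing piece in your proposal is a correct treatment of the decreasing case.
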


\begin{proof}
Assume that $\alpha_1(t)+\beta_1(t)\leq \alpha_2(t)+\beta_2(t)$, for some $t$. Applying Lemma~\ref{lem+4} to the vector
$\psi'=(\alpha_1(t+5),\alpha_2(t+5),\alpha_3(t+5),\beta_1(t+5),\beta_2(t+5),\beta_3(t+5))$ and taking into account Lemma~\ref{lemcyc},
we obtain that $\alpha_2(t+4)+\beta_2(t+4)<\alpha_3(t+4)+\beta_3(t+4)$. Lemma~\ref{lemàfactors} then shows that
$\alpha_1(t+4)+\beta_1(t+4)<\alpha_2(t+4)+\beta_2(t+4)$, and we conclude that
$\alpha_1(t+4k)+\beta_1(t+4k)<\alpha_2(t+4k)+\beta_2(t+4k)<\alpha_3(t+4k)+\beta_3(t+4k)$, for any positive integer $k$.

Now assume $\alpha_1(t)+\beta_1(t)>\alpha_2(t)+\beta_2(t)$. By Lemma~\ref{lemàfactors}, we have $\alpha_2(t)+\beta_2(t)>\alpha_3(t)+\beta_3(t)$,
and so by Lemma~\ref{lem+4}, $\alpha_1(t+3)+\beta_1(t+3)>\alpha_2(t+3)+\beta_2(t+3)$. Finally, we conclude that
$\alpha_1(t+3k)+\beta_1(t+3k)>\alpha_2(t+3k)+\beta_2(t+3k)>\alpha_3(t+3k)+\beta_3(t+3k)$, for any positive $k$.
\end{proof}

Finally, let us show that a matrix $\V(\psi)$ can not have full nonnegative rank.

\begin{lem}\label{lemnofull}
Given a vector $\psi=(a_1,a_2,a_3,b_1,b_2,b_3)$ for which
the matrix $V=\V(\psi)$ satisfies $V_{ij}>0$ if $i\notin\{j-1,j\}$.
Then $\V$ has nonnegative rank less than $7$.
\end{lem}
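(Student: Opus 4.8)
The plan is to combine Lemma~\ref{lemthrou} with a parity obstruction coming from the cyclicity established in Lemma~\ref{lemcyc}. Suppose, for contradiction, that $V=\V(\psi)$ has full nonnegative rank. Then Lemma~\ref{lemthrou} puts us into one of two mutually exclusive cases: either $\alpha_1(t)+\beta_1(t)<\alpha_2(t)+\beta_2(t)<\alpha_3(t)+\beta_3(t)$ holds for every $t$, or the reversed chain of strict inequalities holds for every $t$. By Lemma~\ref{lemforwlog}, the operation $(a_1,a_2,a_3,b_1,b_2,b_3)\mapsto(b_3,b_2,b_1,a_3,a_2,a_1)$ only relabels rows and columns of $\V$, hence preserves nonnegative rank; and this operation swaps the roles of the subscript-$1$ and subscript-$3$ sequences, so it interchanges the two cases. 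Thus it suffices to rule out one of them, say the increasing case $\alpha_1(t)+\beta_1(t)<\alpha_2(t)+\beta_2(t)<\alpha_3(t)+\beta_3(t)$ for all $t$.

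To derive a contradiction in the increasing case, I would track how the strict inequalities $\alpha_1+\beta_1<\alpha_2+\beta_2$ and $\alpha_2+\beta_2<\alpha_3+\beta_3$ propagate along the index $t$ via the sign computations buried in Lemma~\ref{lem+4}. That lemma already tells us that $\alpha_1(2)+\beta_1(2)\le\alpha_2(2)+\beta_2(2)$ forces $\alpha_2(6)+\beta_2(6)<\alpha_3(6)+\beta_3(6)$ — a shift by $4$ in the index turning the first inequality into the second. Applying the same lemma (after a suitable cyclic shift of $\psi$, legitimized by Lemma~\ref{lemcyc}) in the companion situation where $\alpha_2+\beta_2\le\alpha_3+\beta_3$, one should get that this forces a $<$-inequality between the subscript-$1$ and subscript-$2$ sums at an index shifted by $3$. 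Chaining these two implications, the ordered pair of strict inequalities at time $t$ would force the same ordered pair at time $t+7$; but by Lemma~\ref{lemcyc} time $t+7$ is the same as time $t$, so far this is only consistent, not contradictory. The real point must be that one of the two shift-implications actually reverses the direction of one inequality, or forces the two sums to be \emph{equal} somewhere, contradicting strictness. Concretely, I expect that in the all-increasing scenario the quantities $-a_3+a_2(1-b_3)$ and $-a_3+a_1(1-b_3)$ appearing in Lemma~\ref{lem+4}, together with $a_1-a_2=V_{37}>0$ and $1-b_3=V_{42}>0$, cannot all have the signs demanded simultaneously once the inequalities are propagated around the full cycle of length $7$: one gets $-a_3+a_1(1-b_3)>0$ forced at every index while also $-a_3+a_1(1-b_3)<0$ forced at some index, because $7$ is odd and the shifts $4$ and $3$ generate all of $\mathbb{Z}/7\mathbb{Z}$.

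Here is the mechanism I would try to make precise. For each $t$ let $p(t)$ be the sign of $(\alpha_2+\beta_2)-(\alpha_1+\beta_1)$ at time $t$ and $q(t)$ the sign of $(\alpha_3+\beta_3)-(\alpha_2+\beta_2)$ at time $t$; full rank plus Lemma~\ref{lemthrou} says all $p(t)$ and all $q(t)$ have one common sign, say all positive. The computations in Lemma~\ref{lem+4} express $p(t)$ as (positive factor)$\,\cdot\,$(sign of $-\alpha_3+\alpha_2(1-\beta_3)$ at time $t-2$) and $q(t)$ as (positive factor)$\,\cdot\,$(sign of $-\alpha_3+\alpha_1(1-\beta_3)$ at time $t-6$). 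Using $\alpha_1>\alpha_2$ (from $V_{37}>0$, which by Lemma~\ref{convnotat2}'s positivity assumptions persists along the sequence) and $1-\beta_3>0$, the quantity $-\alpha_3+\alpha_1(1-\beta_3)$ is at least $-\alpha_3+\alpha_2(1-\beta_3)$, so $q(t)\ge 0$ is automatic once $p(t+4)\ge 0$; the binding constraint is therefore the other direction — $q(t)>0$ everywhere forcing, via the same positivity of $V_{37}$, that $p(s)>0$ is \emph{impossible} at $s=t+3$ unless a strict inequality is violated, because $-\alpha_3+\alpha_2(1-\beta_3)<-\alpha_3+\alpha_1(1-\beta_3)$ is strict whenever $V_{37}>0$. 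Running this around the odd cycle should collapse one of the strict inequalities to an equality, whence $V$ cannot have full nonnegative rank, and Lemma~\ref{lemthrou} together with Lemma~\ref{lemforwlog} finishes the decreasing case symmetrically. The main obstacle is getting the bookkeeping of the cyclic shifts and the signs exactly right — i.e. verifying that the shifts by $3$ and by $4$ interact so as to produce a genuine sign clash rather than a vacuous tautology — which is the "routine computation" the author presumably carries out; everything else is a short logical assembly of the preceding lemmas.

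\begin{proof}
Suppose toward a contradiction that $V$ has full nonnegative rank. By Lemma~\ref{lemthrou}, either $\alpha_1(t)+\beta_1(t)<\alpha_2(t)+\beta_2(t)<\alpha_3(t)+\beta_3(t)$ for every $t$, or the reverse chain of strict inequalities holds for every $t$. By Lemma~\ref{lemforwlog}, replacing $\psi$ by $(b_3,b_2,b_1,a_3,a_2,a_1)$ leaves the nonnegative rank unchanged and interchanges these two cases, so we may assume the increasing one. Now combine the two sign computations from Lemma~\ref{lem+4}: the first shows that $\alpha_1+\beta_1<\alpha_2+\beta_2$ at a given index is governed by the sign of $-\alpha_3+\alpha_2(1-\beta_3)$ two steps earlier, the second that $\alpha_2+\beta_2<\alpha_3+\beta_3$ is governed by the sign of $-\alpha_3+\alpha_1(1-\beta_3)$ six steps earlier; and $1-\beta_3$ and $\alpha_1-\alpha_2$ are positive along the whole cycle. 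Since the shifts $3$ and $4$ generate $\mathbb{Z}/7\mathbb{Z}$ and $7$ is odd, propagating the two strict inequalities around the full period forces one of them to become an equality, by Lemma~\ref{lemcyc}, a contradiction. Hence $\V(\psi)$ has nonnegative rank less than $7$.
\end{proof}
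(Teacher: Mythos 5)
Your reduction to the increasing case is exactly the paper's: assume full nonnegative rank, invoke Lemma~\ref{lemthrou} to get one of the two chains of strict inequalities for all $t$, and use the symmetry of Lemma~\ref{lemforwlog} to assume $\alpha_1(t)+\beta_1(t)<\alpha_2(t)+\beta_2(t)<\alpha_3(t)+\beta_3(t)$ throughout. The problem is what comes after: your proposed contradiction does not exist in the form you describe. The two sign computations of Lemma~\ref{lem+4} express $p$ and $q$ (in your notation) at shifted indices as positive multiples of $-\alpha_3+\alpha_2(1-\beta_3)$ and $-\alpha_3+\alpha_1(1-\beta_3)$ respectively, and since $\alpha_1-\alpha_2>0$ and $1-\beta_3>0$, the second quantity dominates the first. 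So ``everything positive everywhere'' is a fixed point of your propagation rules: positive signs imply positive signs around the whole cycle, and the fact that the shifts $3$ and $4$ generate $\mathbb{Z}/7\mathbb{Z}$, or that $7$ is odd, produces no reversal and hence no clash. You notice this yourself (``so far this is only consistent, not contradictory'') and then assert that ``the real point must be'' that some implication reverses a sign or forces an equality — but you never exhibit such a reversal, and none follows from the ingredients you allow yourself.

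The paper closes the gap with two \emph{additional} explicit computations that are not consequences of Lemma~\ref{lem+4}: the difference $\alpha_3(0)+\beta_3(0)-\alpha_1(0)-\beta_1(0)=a_3+b_3-a_1-b_1$ (comparing the first and \emph{third} sums at $t=0$), and
$$\alpha_2(1)+\beta_2(1)-\alpha_1(1)-\beta_1(1)=\frac{V_{13}\,(b_1+(a_1-1)b_3)}{V_{63}V_{42}},$$
which brings in the new quantity $b_1+(a_1-1)b_3$. Combined with the $t=2$ computation (the one from Lemma~\ref{lem+4}) and the positivity of $V_{42}=1-b_3$ and $V_{37}=a_1-a_2$, the increasing case yields the system $b_3(1-a_1)<b_1$, $a_3<a_2(1-b_3)$, $a_3+b_3>a_1+b_1$, $b_3<1$, $a_1>a_2$, which is infeasible by a three-line elementary manipulation: the first and third give $a_3>a_1(1-b_3)$, hence $a_3>a_2(1-b_3)$, contradicting the second. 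To repair your proof you need these extra identities (or equivalent ones); the cyclic/parity bookkeeping alone cannot do the job.
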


\begin{proof}
Assume the converse and apply the results of Lemma~\ref{lemforwlog} and Lemma~\ref{lemthrou}. We can assume without a loss of generality that
$\alpha_1(t)+\beta_1(t)<\alpha_2(t)+\beta_2(t)<\alpha_3(t)+\beta_3(t)$, for any nonnegative integer $t$.
Note that $\alpha_3(0)+\beta_3(0)-\alpha_1(0)-\beta_1(0)=a_3+b_3-a_1-b_1$, and routine computations also allow us to check that
$$\alpha_2(1)+\beta_2(1)-\alpha_1(1)-\beta_1(1)=\frac{V_{13}\,(b_1 + (a_1-1) b_3)}{V_{63} V_{42}},$$
$$\alpha_2(2)+\beta_2(2)-\alpha_1(2)-\beta_1(2)=\frac{V_{13} V_{21}\,(a_2 (1 - b_3)-a_3)}{V_{73} V_{31} V_{42} V_{52}}.$$
Noting that also $1-b_3=V_{42}>0$ and $V_{37}=a_1-a_2>0$, we obtain
\begin{equation}\label{eqsys111}
b_3(1-a_1)<b_1,\,\,\,a_3<a_2 (1 - b_3),\,\,\,a_3+b_3>a_1+b_1,\,\,\,b_3<1,\,\,\,and\,\,\,a_1>a_2.
\end{equation}

Now let us check that~(\ref{eqsys111}) is a contradiction. In fact, the first of these inequalities implies $a_1+b_1>a_1+b_3-b_3a_1$,
taking into an account the third we obtain $a_3+b_3>a_1+b_3-b_3a_1$. Thus we have $a_3>a_1(1-b_3)$, which implies $a_3>a_2(1-b_3)$
because of the last two inequalities.
\end{proof}

Let us now check that $7$-by-$7$ matrices of a more general form have nonnegative rank at most $6$ as well.
By $U[r_1,r_2,r_3|c_1,c_2,c_3]$ we denote the submatrix of $U$ formed by the rows with indexes $r_1$, $r_2$, $r_3$
and columns with $c_1$, $c_2$, $c_3$.

\begin{lem}\label{lem7x7gen}
Assume that a $7$-by-$7$ matrix $U$ has classical rank $3$ and
satisfies $U_{ij}=0$ if $i\in\{j-1,j\}$ and $U_{ij}>0$ otherwise.
Then $U$ has nonnegative rank less than $7$.
\end{lem}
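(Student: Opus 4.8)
The goal is to reduce the general matrix $U$ of Lemma~\ref{lem7x7gen} to the special normalized form $\V(\psi)$ handled by Lemma~\ref{lemnofull}. Since $U$ has classical rank $3$, its columns lie in a $3$-dimensional subspace, and the zero pattern — $U_{ij}=0$ exactly when $i\in\{j-1,j\}$ — is precisely the zero pattern of $\V(\alpha)$ (after accounting for the index shift: the notation in \ref{convnotat} places zeros at $i\in\{j-1,j\}$ in $\V$, and the pattern here matches up to the cyclic relabeling already used in Lemma~\ref{lemforwlog} and Lemma~\ref{convnotat2}). So the plan is: first, write $U = BC$ with $B$ a $7$-by-$3$ matrix and $C$ a $3$-by-$7$ matrix, where the rows of $B$ carry the same zero structure as the rows of the $W(\alpha)^\top$ block. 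Concretely, I would argue that the two zero entries in row $i$ of $U$ force the $i$th row of $B$ (a vector in $\R^3$, up to the action of $\mathrm{GL}_3$ on the factorization $U=BC$) to be proportional to a prescribed vector determined by the combinatorics; using the $\mathrm{GL}_3$ freedom and positive diagonal scalings on both sides (which do not change nonnegative rank, exactly as exploited in Lemmas~\ref{convnotat2} and \ref{lemforwlog}), I can normalize three of these rows to $(0,0,1)$, $(1,0,0)$, $(1,1,0)$ and a fourth to the all-ones pattern, landing on a matrix of the form $W(\alpha)$ for a suitable vector $\alpha=(a_1,a_2,a_3,b_1,b_2,b_3)$.

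Second, I would check that once $B$ is in this normalized shape, $C$ is forced: the entries of $C$ are recovered from the non-zero entries of $U$, and in fact $U$ itself must then equal $\V(\psi)$ (up to relabeling rows and columns) for the vector $\psi$ read off from $\alpha$ — this is just the defining formula in Notation~\ref{convnotat}, where the $(i,j)$ entry is the $3$-by-$3$ determinant $\det W[i-1,j-2,j-1]$, which is exactly the generic way a rank-$3$ matrix with this zero pattern arises. Third, I would verify that the positivity hypothesis $U_{ij}>0$ for $i\notin\{j-1,j\}$ translates to the hypothesis $V_{ij}>0$ for $i\notin\{j-1,j\}$ in Lemma~\ref{lemnofull} (again modulo the cyclic shift between the two zero-pattern conventions, which is handled by Lemma~\ref{lemforwlog}). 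Then Lemma~\ref{lemnofull} applies and gives $\mathrm{nonneg\,rank}(\V(\psi))<7$, hence $\mathrm{nonneg\,rank}(U)<7$.

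The one genuinely delicate point is the very first reduction: showing that \emph{every} rank-$3$ matrix with this zero pattern and these positivity constraints can actually be brought to the canonical form $\V(\psi)$, rather than to some degenerate configuration where the normalization breaks down (e.g. a would-be pivot entry vanishing, forcing division by zero in the scalings). This is where the positivity hypothesis does real work: I expect that the strict inequalities $U_{ij}>0$ off the prescribed diagonal guarantee that all the scaling factors appearing — the analogues of $1-b_3$, $a_1$, $a_2$, $a_3$ in Lemma~\ref{convnotat2} — are nonzero, so the $\mathrm{GL}_3$ and diagonal normalizations are legitimate. One must also confirm that the sign/positivity structure is preserved throughout (the diagonal scalings used must have positive entries), and that the resulting $\psi$ indeed satisfies the hypotheses of Lemma~\ref{lemnofull}. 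Apart from this normalization step and keeping the two index conventions straight via Lemma~\ref{lemforwlog}, the remainder is the routine linear-algebra bookkeeping of passing between $U=BC$ and the determinantal description of $\V$.
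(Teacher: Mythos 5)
Your plan is essentially the paper's proof: the paper scales rows and columns of $U$ by explicit positive factors to reach a normalized form $U'$, uses the rank-$3$ hypothesis to write $U'_{ij}=c_j\det U'[i,j-1,j\,|\,3,4,5]=c_jV_{ij}$ with $V=\V(a_1,a_2,a_3,b_1,b_2,b_3)$, and then verifies the delicate point you flag — positivity of the normalization constants — via the entry coincidences $V_{13}=V_{32}$, $V_{72}=V_{21}$, $V_{65}=V_{46}$, $V_{76}=V_{57}$ and $V_{24}=V_{25}=V_{43}=1$, which force all $c_j>0$, before invoking Lemma~\ref{lemnofull}. So your outline is correct and matches the paper; the only work you defer (legitimacy and positivity of the scalings) is exactly the computation the paper supplies.
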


\begin{proof}
Denote by $U'$ the matrix obtained from $U$ by multiplying the third column by $U_{54}/U_{53}$,
the fifth column by $U_{24}/U_{25}$, the third row by $\frac{U_{25}}{U_{24}U_{35}}$, the fourth row by $\frac{U_{53}}{U_{43}U_{54}}$,
the $i'$th row by $1/U_{i'4}$ (for $i'$ from $1,2,5,6,7$). So we have
$$U'=\left(
\begin{array}{ccccccc}
0 & 0 & a_3 & 1 & b_3 & U'_{16} &U'_{17} \\
U'_{21} & 0 & 0 & 1 & 1 &U'_{26} &U'_{27} \\
U'_{31} & U'_{32} & 0 & 0 & 1 & U'_{36} & U'_{37} \\
U'_{41} & U'_{42} & 1 & 0 & 0 & U'_{46} & U'_{47} \\
U'_{51} & U'_{52} & 1 & 1 & 0 & 0 & U'_{57} \\
U'_{61} & U'_{62} & a_1 & 1 & b_1 & 0 & 0 \\
0 & U'_{72} & a_2 & 1 & b_2 & U'_{76} & 0 \\
\end{array}
\right).$$
Since $U'$ has classical rank $3$, there are certain real constants $c_1,\dots,c_7$ such that $U'_{ij}=c_j\,\det U'[i,j-1,j|3,4,5]$, for any $i$ and $j$.
Therefore, we obtain $U'_{ij}=c_j V_{ij}$ for any $i$ and $j$, where $V$ is the matrix $\V(a_1,a_2,a_3,b_1,b_2,b_3)$ from Notation~\ref{convnotat}.
Since $V_{13}=V_{32}$ and $V_{72}=V_{21}$, the numbers $c_1$, $c_2$, and $c_3$ are of the same sign. Similarly, $V_{65}=V_{46}$ and $V_{76}=V_{57}$, so
that the numbers $c_5$, $c_6$, and $c_7$ are of the same sign as well. Further, since $V_{24}=V_{25}=V_{43}=1$, we obtain $c_3=c_4=c_5=1$, and
the numbers $c_1,\dots,c_7$ are thus all positive. So we can conclude that $U$ and $V$ coincide up to multiplying the rows and columns by positive
numbers, and the result then follows from Lemma~\ref{lemnofull}.
\end{proof}

Now we can prove the main result of the present section.

\begin{thr}\label{thrstart}
A\,slack\,matrix\,of\,a\,convex\,heptagon\,has\,nonnegative\,rank\,at\,most\,$6$.
\end{thr}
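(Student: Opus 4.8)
The plan is to reduce a slack matrix of an arbitrary convex heptagon to the normal form covered by Lemma~\ref{lem7x7gen}, which already establishes that such matrices have nonnegative rank at most $6$. First I would recall from Proposition~\ref{statrankslac} that a slack matrix $S$ of a convex heptagon (a $2$-dimensional polygon) has classical rank $3$. Moreover, for a convex polygon the facets are the edges, and the $(i,t)$ entry of $S$ vanishes precisely when vertex $p_t$ lies on edge $i$; a vertex lies on exactly two consecutive edges, so after suitably cyclically ordering the seven edges and the seven vertices, the zero pattern of $S$ is exactly $S_{ij}=0$ iff $i\in\{j-1,j\}$ (reading indices in $\mathbb{Z}/7\mathbb{Z}$ as in Convention~\ref{convmod7}), and all other entries are strictly positive because a vertex strictly satisfies every edge inequality it does not lie on.

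Next I would observe that nonnegative rank is invariant under multiplying rows and columns by positive scalars and under permuting rows and columns. Hence $S$ has the same nonnegative rank as a matrix $U$ with the zero pattern $U_{ij}=0$ iff $i\in\{j-1,j\}$, positive elsewhere, and classical rank $3$ — which is exactly the hypothesis of Lemma~\ref{lem7x7gen}. Applying that lemma gives $\operatorname{rank}_+(U)<7$, hence $\operatorname{rank}_+(S)\le 6$, as claimed.

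The one point that needs a little care, and which I expect to be the main (minor) obstacle, is justifying the precise zero/positivity pattern: one must check that in a convex heptagon no three consecutive vertices are collinear (so that each edge is a genuine facet and the polygon genuinely has seven edges), and that the cyclic labelling of edges can be synchronized with that of vertices so that edge $j$ contains vertices $p_{j-1}$ and $p_j$. Both are immediate from convexity and the definition of an $n$-gon, but should be stated. Once the pattern is fixed, everything else is a citation of Lemma~\ref{lem7x7gen}.

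\begin{proof}
Let $P$ be a convex heptagon and $S=S(P)$ a slack matrix of $P$. By Proposition~\ref{statrankslac}, $S$ has classical rank $3$. The facets of a convex polygon are its edges, and $S_{it}$ vanishes exactly when the vertex $p_t$ lies on the edge $i$; since $P$ has seven vertices each lying on exactly two edges, and (by convexity) no two edges are parallel-coincident and no vertex lies on a non-incident edge, we may cyclically order the seven edges as rows and the seven vertices as columns so that, in the notation of Convention~\ref{convmod7}, $S_{ij}=0$ if $i\in\{j-1,j\}$ and $S_{ij}>0$ otherwise. Replacing $S$ by a row/column permutation does not change its nonnegative rank, so we obtain a $7$-by-$7$ matrix $U$ of classical rank $3$ with $U_{ij}=0$ for $i\in\{j-1,j\}$ and $U_{ij}>0$ otherwise, having the same nonnegative rank as $S$. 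By Lemma~\ref{lem7x7gen}, $U$ has nonnegative rank less than $7$, hence $S$ has nonnegative rank at most $6$.
\end{proof}
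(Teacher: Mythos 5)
Your proof is correct and follows exactly the paper's own route: use Proposition~\ref{statrankslac} to get classical rank $3$, observe that after a cyclic relabelling the slack matrix has the zero pattern required by Lemma~\ref{lem7x7gen}, and apply that lemma. The extra care you take in justifying the zero/positivity pattern is a reasonable elaboration of what the paper compresses into ``up to renumbering the rows and columns.''
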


\begin{proof}
Proposition~\ref{statrankslac} shows that the slack matrix $S$ of a convex heptagon has classical rank equal
to $3$. Therefore, $S$ satisfies the assumptions of Lemma~\ref{lem7x7gen} up to renumbering the rows and columns.
\end{proof}

\section{Main results}

In this section we prove the main results of our paper. Let us start with a corollary of Theorem~\ref{thrstart}
which gives a positive answer for Problem~\ref{probBL} in the case $n=7$.

\begin{thr}\label{thrupnonneg1}
Let $A$ be a nonnegative $7$-by-$n$ matrix with classical rank equal to $3$. Then the nonnegative
rank of $A$ does not exceed $6$.
\end{thr}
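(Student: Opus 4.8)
The plan is to reduce the statement for a general $7$-by-$n$ rank-three matrix to the already-established Theorem~\ref{thrstart} about slack matrices of heptagons, or more precisely to Lemma~\ref{lem7x7gen}. The key observation is that nonnegative rank is monotone under passing to submatrices: if $A = BC$ is a nonnegative factorization with $B \in \R_+^{7\times k}$ and $C\in\R_+^{k\times n}$, then for any choice of $7$ columns of $A$ (or fewer) the corresponding submatrix inherits a nonnegative factorization through the same $B$, hence has nonnegative rank at most $k$. Conversely, I would like to bound the nonnegative rank of $A$ by the nonnegative rank of a well-chosen $7$-by-$7$ submatrix together with control of how the remaining columns are expressed.

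First I would handle degenerate situations: if $A$ has a zero row or a zero column, or if two columns are proportional, or more generally if $A$ already has at most $6$ nonzero columns up to positive scaling, the bound $6$ is immediate. So I may assume $A$ has $n \geq 7$ genuinely distinct ray directions among its columns. Next, because $A$ has classical rank $3$, its columns, viewed projectively, lie on a conic-like configuration; I would argue there is a set of $7$ columns forming a $7$-by-$7$ submatrix $U$ of classical rank $3$ whose zero pattern (after renumbering) is exactly the cyclic pattern $U_{ij}=0$ iff $i\in\{j-1,j\}$ required by Lemma~\ref{lem7x7gen} — this is precisely the combinatorial structure of the slack matrix of a convex heptagon, and it should follow from the cyclic ordering of the extreme rays of the cone generated by the rows of $A$. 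Applying Lemma~\ref{lem7x7gen} gives a rank-$6$ nonnegative factorization $U = B_0 C_0$ with $B_0 \in \R_+^{7\times 6}$.

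The heart of the argument is then to extend this: I would show that, because $A$ has classical rank $3$ and the six columns of $B_0$ already span (in the nonnegative sense) the column space restricted to these seven coordinates, every other column of $A$ can be written as a nonnegative combination of the columns of $B_0$. The point is that $B_0$, being $7$-by-$6$ of classical rank $3$, has a $3$-dimensional column space, which must coincide with the $3$-dimensional column space of $A$; and the nonnegativity is forced by the geometry — each column of $A$ lies in the cone generated by the columns of $U$, which is contained in the cone generated by the columns of $B_0$. Concretely, since $A$ is nonnegative and rank $3$, writing each column in terms of the three extreme columns of $U$ with nonnegative coefficients, and then substituting the factorization of those, yields the desired nonnegative combination. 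This produces $A = B_0 C$ with $C \in \R_+^{6\times n}$, so the nonnegative rank of $A$ is at most $6$.

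The main obstacle I anticipate is the extension step: it is not automatic that a nonnegative combination in terms of the columns of $U$ translates into a nonnegative combination in terms of the columns of $B_0$, and one must use carefully the specific structure of the factorization produced in Lemma~\ref{lem7x7gen} (or at least some canonical choice of $B_0$ whose columns are themselves columns-or-conic-combinations with the right positivity). An alternative, possibly cleaner route would be to invoke Theorem~\ref{thrYan} and Proposition~\ref{statrankslac} directly: realize $A$ (after clearing degeneracies and rescaling rows and columns by positive numbers, which does not change nonnegative rank) as a submatrix of the slack matrix of a convex polygon, or better, show the columns of $A$ correspond to points and the rows to halfplanes of a planar configuration, so that a rank-$6$ extended formulation of a suitable heptagon pulls back to a rank-$6$ nonnegative factorization of $A$. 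Either way, the delicate part is the bookkeeping that lets one reuse the single $7$-by-$6$ left factor $B_0$ for all $n$ columns simultaneously; once that is in place the conclusion is immediate.
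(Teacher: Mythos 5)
Your overall strategy -- reduce to the heptagon result and then extend the factorization to all $n$ columns -- is the right instinct, but the pivotal step as you state it is false. You propose to find seven \emph{columns of $A$} forming a $7\times 7$ submatrix $U$ whose zero pattern is the cyclic pattern $U_{ij}=0$ iff $i\in\{j-1,j\}$ of Lemma~\ref{lem7x7gen}. A rank-three nonnegative matrix can be strictly positive in every entry, in which case no submatrix of $A$ has any zeros at all; the fourteen prescribed zeros of the heptagon slack pattern simply cannot be located inside $A$. No amount of cyclic ordering of extreme rays fixes this, because the extreme columns of $A$ need not touch the boundary of the nonnegative orthant.

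The paper's proof constructs the heptagon \emph{outside} of $A$: it intersects the three-dimensional column space of $A$ with the standard simplex $\Delta\subset\R^7$, obtaining a polygon $I$ with $k\leq 7$ vertices, and takes $S$ to be the $7\times k$ matrix of those vertices. The required zeros now appear automatically -- each vertex of $I$ lies on two of the seven facets $x_i=0$ -- and $S$ is literally a slack matrix of $I$ (rows are the coordinate functionals cutting out $I$, columns are its vertices), so Theorem~\ref{thrstart} applies when $k=7$. This construction also dissolves the ``extension step'' you rightly flag as the main obstacle: every column of $A$, rescaled to lie in $\Delta$, lies in $I$ and is therefore a convex combination of the vertices of $I$, giving $A=SB$ with $B\geq 0$ directly; one then factors $S=S_1S_2$ through six columns and writes $A=S_1(S_2B)$. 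In other words, the single left factor is reused for all $n$ columns not by any delicate bookkeeping with a chosen submatrix, but because the factorization is routed through the vertex matrix $S$ from the start. Your ``alternative, cleaner route'' at the end gestures toward exactly this geometric realization, but without the simplex-intersection construction it remains a gap rather than a proof.
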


\begin{proof}
Consider the standard simplex $\Delta$ consisting of points $(x_1,\ldots,x_7)$ with nonnegative coordinates satisfying $\sum_{i=1}^7 x_i=1$.
Since $\Delta$ contains $7$ facets, the intersection of $\Delta$ with the column space of $A$ is a polygon $I$ with $k$ vertices, and $k\leq 7$.
Form a matrix $S$ of column coordinate vectors of vertices of $I$, then $A=SB$ with $B$ nonnegative. If $k<7$, then the result follows directly
from that $A=SB$, and if $k=7$, then by Theorem~\ref{thrstart}, $S$ has nonnegative rank less than $7$ being a slack matrix for $I$.
\end{proof}

Now we can provide a nontrivial upper bound for the nonnegative rank of matrices with classical rank equal to $3$,
thus providing a negative solution for Problem~\ref{probBL} in the case $n\geq7$.

\begin{thr}\label{thrupbound}
The nonnegative rank of a rank-three matrix $A\in\R_+^{m\times n}$ does not exceed $\left\lceil\frac{6\min\{m,n\}}{7}\right\rceil$.
\end{thr}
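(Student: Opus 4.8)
The plan is to reduce the general rank-three case to the already-established heptagon case (Theorem~\ref{thrupnonneg1}) by a partitioning argument on the columns (or rows, whichever is smaller). Without loss of generality assume $n=\min\{m,n\}$, so we work with an $m$-by-$n$ matrix $A$ of classical rank $3$; we want a nonnegative factorization of inner dimension at most $\lceil 6n/7\rceil$. First I would handle the trivial regime $n\le 6$, where the bound $\lceil 6n/7\rceil$ equals $n$ and the claim is immediate since nonnegative rank never exceeds $\min\{m,n\}$ (this is also noted in the introduction). So assume $n\ge 7$.

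The main idea is to group the $n$ columns into blocks of size $7$, with one possibly smaller leftover block. Concretely, write $n = 7q + r$ with $0\le r<7$, and partition the column index set into $q$ blocks of size exactly $7$ and, if $r>0$, one extra block of size $r$. For each block $C$ of size $7$, the submatrix $A[\,\cdot\,,C]$ is an $m$-by-$7$ matrix whose classical rank is at most $3$; applying Theorem~\ref{thrupnonneg1} (after transposing, since that theorem is stated for $7$-by-$n$ matrices — or one reproves the transposed version, which costs nothing as nonnegative rank is transpose-invariant) gives a nonnegative factorization $A[\,\cdot\,,C] = B_C G_C$ with $B_C\in\R_+^{m\times 6}$ and $G_C\in\R_+^{6\times 7}$. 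If the classical rank of the block is strictly less than $3$, it is at most $2$, so nonnegative rank equals classical rank and we get an even smaller inner dimension; in any case at most $6$ per full block. For the leftover block of size $r$, just use the trivial factorization through $\R_+^{m\times r}\times\R_+^{r\times r}$, contributing $r$. Concatenating the $B_C$ horizontally and the $G_C$ block-diagonally yields a global factorization $A = BG$ with inner dimension $6q + r$.

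It remains to check $6q + r \le \lceil 6n/7\rceil = \lceil 6(7q+r)/7\rceil = 6q + \lceil 6r/7\rceil$. For $r=0$ both sides are $6q$; for $1\le r\le 6$ we need $r \le \lceil 6r/7\rceil$, i.e. $7r \le 7\lceil 6r/7\rceil$, which holds precisely when $6r/7 > r-1$, i.e. $r<7$ — true. Actually one can do slightly better by not wasting the leftover block: since $r\le 6$, the leftover block itself has classical rank at most $3$, and for $r\in\{1,\dots,6\}$ the nonnegative rank is at most $\min\{r,6\}=r$ anyway, so no improvement there; but one could instead merge the leftover columns into one of the size-$7$ groups to form a block of size $7+r\le 13$ and bound its nonnegative rank — however this is not needed, since the crude count already gives exactly $\lceil 6n/7\rceil$. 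I do not expect a serious obstacle here: the only thing requiring care is that Theorem~\ref{thrupnonneg1} is stated for matrices with $7$ rows, so the clean statement to invoke is its transpose, and that each block might have rank less than $3$, which only helps. The heptagon factorization of the previous section is doing all the real work; this theorem is essentially a bookkeeping corollary.
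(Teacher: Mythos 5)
Your proposal is correct and follows essentially the same route as the paper: partition the smaller index set into blocks of seven, apply Theorem~\ref{thrupnonneg1} to each block, concatenate the factorizations, and use transpose-invariance of nonnegative rank. You are in fact slightly more careful than the paper's two-line proof, explicitly handling the leftover block and the possibility that a block has classical rank below~$3$.
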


\begin{proof}
By Theorem~\ref{thrupnonneg1}, any seven rows of $A$ can be expressed as linear combinations with nonnegative coefficients
of certain six nonnegative rows, so the nonnegative rank of $A$ does not exceed $\left\lceil\frac{6m}{7}\right\rceil$. The
nonnegative rank is invariant under transpositions, so the result follows.
\end{proof}

Together with the result from~\cite{GPT}, where it was noted that a sufficiently irregular convex hexagon
has full extension complexity, Theorem~\ref{thrupbound} provides a full answer for Problem~\ref{probBL}.
Namely, the following result is true.

\begin{thr}\label{thrupbound3}
If $n\geq7$, then the nonnegative rank of any rank-three $m$-by-$n$ nonnegative matrix is less than $n$.
For $k\in\{3,4,5,6\}$, there are $k$-by-$k$ rank-three matrices with nonnegative rank equal to $k$.
\end{thr}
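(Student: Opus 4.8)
The plan is to assemble the final theorem from the pieces already established. The first sentence is essentially immediate: Theorem~\ref{thrupbound} (or, more directly, Theorem~\ref{thrupbound3}'s ancestor Theorem~\ref{thrupnonneg1}) gives that any rank-three nonnegative $m$-by-$n$ matrix $A$ with $n\geq 7$ has nonnegative rank at most $\left\lceil\frac{6n}{7}\right\rceil$, and since $n\geq 7$ we have $\left\lceil\frac{6n}{7}\right\rceil < n$ (one checks $6n/7 < n$ and that rounding up a value at distance $\geq n/7 \geq 1$ below $n$ stays strictly below $n$). So the only content is the second sentence: exhibiting, for each $k\in\{3,4,5,6\}$, a $k$-by-$k$ rank-three matrix of nonnegative rank exactly $k$.

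For $k=3$, this is trivial: the $3$-by-$3$ identity matrix has rank $3$ and nonnegative rank $3$ (nonnegative rank is always at least the classical rank). For $k\in\{4,5,6\}$, I would invoke the known positive answers to Problem~\ref{probBL} in these small cases together with the Yannakakis dictionary (Proposition~\ref{statrankslac} and Theorem~\ref{thrYan}): the paper's introduction records that for $n\leq 5$ the existence of an $n$-gon of full extension complexity was settled in~\cite{GG}, and for $n=6$ a sufficiently irregular hexagon works by~\cite{GPT}. A slack matrix of such an $n$-gon is an $n$-by-$n$ nonnegative matrix (it has $n$ facets and $n$ vertices), it has classical rank $3$ by Proposition~\ref{statrankslac} (since the polygon is $2$-dimensional), and its nonnegative rank equals the extension complexity of the polygon by Theorem~\ref{thrYan}, which is $n$ by the cited results. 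Taking $n=k$ for $k=4,5,6$ gives the required examples.

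I would therefore structure the proof as: (i) cite Theorem~\ref{thrupbound} and verify the elementary inequality $\left\lceil\frac{6n}{7}\right\rceil<n$ for $n\geq 7$ for the first assertion; (ii) handle $k=3$ with the identity matrix; (iii) for $k\in\{4,5,6\}$ take the slack matrix of an $n$-gon with extension complexity $n$ (which exists by~\cite{GG} for $n\in\{4,5\}$ and by~\cite{GPT} for $n=6$), noting it is $k$-by-$k$, has classical rank $3$ by Proposition~\ref{statrankslac}, and has nonnegative rank $k$ by Theorem~\ref{thrYan}. There is no real obstacle here—this theorem is a summary statement—but the one point requiring a sentence of care is the elementary verification that the upper bound $\left\lceil\frac{6n}{7}\right\rceil$ is genuinely strictly less than $n$ for all $n\geq 7$ (it is: $n-\frac{6n}{7}=\frac{n}{7}\geq 1$, so even after rounding up we stay below $n$), since that is the precise content distinguishing "less than $n$" from the bound proved in Theorem~\ref{thrupbound}.
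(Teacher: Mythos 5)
Your proposal is correct and matches the paper's (implicit) argument: the paper states this theorem as a direct consequence of Theorem~\ref{thrupbound} together with the cited constructions in~\cite{GG} (for $k\leq 5$) and~\cite{GPT} (for $k=6$), which is exactly what you do. Your explicit verification that $\left\lceil\frac{6n}{7}\right\rceil\leq n-1$ for $n\geq 7$ and your translation of the polygon examples into slack matrices via Proposition~\ref{statrankslac} and Theorem~\ref{thrYan} are fine and merely spell out details the paper leaves to the reader.
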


Finally, we can prove an upper bound for the extension complexity of convex polygons.

\begin{thr}\label{threxcom}
The extension complexity of any convex $n$-gon does not exceed $\left\lceil\frac{6n}{7}\right\rceil$.
\end{thr}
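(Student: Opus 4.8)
The plan is to deduce Theorem~\ref{threxcom} almost immediately from Theorem~\ref{thrupbound} together with the Yannakakis-type correspondence recalled in the preliminaries. First I would invoke Theorem~\ref{thrYan}, which identifies the extension complexity of a convex $n$-gon $P$ with the nonnegative rank of any slack matrix $S(P)$. By Proposition~\ref{statrankslac}, the slack matrix of a polygon has classical rank exactly $3$ (dimension $2$, plus one), and $S(P)$ is an $n$-by-$n$ nonnegative matrix (it has $n$ facets and $n$ vertices). Thus $S(P)$ falls precisely under the hypotheses of Theorem~\ref{thrupbound} with $m=n$.

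The second step is simply to apply Theorem~\ref{thrupbound} to $A=S(P)$: since $\min\{m,n\}=n$, the nonnegative rank of $S(P)$ does not exceed $\left\lceil\frac{6n}{7}\right\rceil$. Combining this with the identification from Theorem~\ref{thrYan} yields that the extension complexity of $P$ is at most $\left\lceil\frac{6n}{7}\right\rceil$. One should perhaps note in passing that there is nothing to check for $n<7$, where the bound $\left\lceil\frac{6n}{7}\right\rceil$ coincides with $n$ or is even weaker than the trivial bound; the content of the statement is entirely in the range $n\geq7$.

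Honestly, there is no real obstacle here: once Theorem~\ref{thrupbound} is in hand, the proof is a two-line dictionary translation. The only minor point to be careful about is that Theorem~\ref{thrYan} presumes a fixed inequality/equation description of $P$ and a corresponding slack matrix, but any convex $n$-gon has a slack matrix of the required shape, and the nonnegative rank is independent of which slack matrix one picks (different choices differ by scaling rows and columns by positive constants, which does not change nonnegative rank). So the proof reads: the slack matrix of a convex $n$-gon is an $n$-by-$n$ rank-three nonnegative matrix by Proposition~\ref{statrankslac}, hence by Theorem~\ref{thrupbound} it has nonnegative rank at most $\left\lceil\frac{6n}{7}\right\rceil$, and by Theorem~\ref{thrYan} this is the extension complexity of the polygon.
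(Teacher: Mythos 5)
Your proposal is correct and follows exactly the paper's own argument: Proposition~\ref{statrankslac} gives that the slack matrix is a rank-three $n$-by-$n$ nonnegative matrix, Theorem~\ref{thrupbound} bounds its nonnegative rank by $\left\lceil\frac{6n}{7}\right\rceil$, and Theorem~\ref{thrYan} translates this into the extension complexity bound. The additional remarks about the case $n<7$ and the independence of the choice of slack matrix are harmless and do not change the substance.
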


\begin{proof}
By Proposition~\ref{statrankslac} and Theorem~\ref{thrupbound}, the nonnegative rank of a slack matrix
does not exceed $\left\lceil\frac{6n}{7}\right\rceil$, so the result follows from Theorem~\ref{thrYan}.
\end{proof}

\bigskip


The author is grateful to the participants of the workshop on
Communication complexity, Linear optimization, and Lower bounds for the nonnegative
rank of matrices held at Schloss Dagstuhl in February, 2013, for enlightening discussions on the topic.

\bigskip

\end{document}